\newcommand{\cV}{\mathcal{V}}
\newcommand{\bc}{\boldsymbol{c}}
\newcommand{\cT}{\mathcal{T}}
\newcommand{\ce}{\mathcal{E}}
\newcommand{\cE}{\mathcal{E}}
\newcommand{\cZ}{\mathcal{Z}}
\newcommand{\si}{\sigma}
\newcommand{\Rho} {\mathrm{P}}
\newcommand{\mbf} [1]{\mathbf{#1}}
\newcommand{\RR} {\mathbb{R}}
\newcommand{\NN} {\mathbb{N}}
\def\sideremark#1{\ifvmode\leavevmode\fi\vadjust{\vbox to0pt{\vss
 \hbox to 0pt{\hskip\hsize\hskip1em
 \vbox{\hsize3cm\tiny\raggedright\pretolerance10000
 \noindent #1\hfill}\hss}\vbox to8pt{\vfil}\vss}}}
\begin{document}

\title*{Distinguished curves and first integrals on Poincar\'e-Einstein and other conformally singular geometries}
\titlerunning{Distinguished curves on conformally singular geometries}
% Use \titlerunning{Short Title} for an abbreviated version of
% your contribution title if the original one is too long
\author{A.~Rod Gover and Daniel Snell}
% Use \authorrunning{Short Title} for an abbreviated version of
% your contribution title if the original one is too long
\institute{A.~Rod Gover \at Department of Mathematics, The University of Auckland,
    Private Bag 92019, Auckland 1142, New Zealand \email{r.gover@auckland.ac.nz}
\and Daniel Snell \at Department of Mathematics, The University of Auckland,
    Private Bag 92019, Auckland 1142, New Zealand \email{daniel.snell@auckland.ac.nz}}

\maketitle

\abstract{We treat the problem of defining, and characterising in a
  practical way, an appropriate class of distinguished curves for
  Poincar\'e-Einstein manifolds, and other conformally singular
  geometries. These ``generalised geodesics'' agree with geodesics away
  from the conformal singularity set and are shown to satisfy natural
  ``boundary conditions'' at points where they meet or cross the
  metric singularity set.  We also characterise when they coincide
  with conformal circles. In the case of (Poincar\'e-)Einstein
  manifolds, we are able to provide a very general theory of first
  integrals for these distinguished curves.  As well as the general
  procedure outlined, a specific example is given.}

\section{Introduction}
\label{section_intro}

Geodesics and other distinguished curves play a basic and essential
role in differential geometry and its applications~\cite{Andersson-Blue,Frolov2017,Guill,HU,Wald1984}
The determination and study of these
can be enormously simplified if one has a available curve first
integrals~\cite{Andersson-Blue,Carter1968,Frolov2017,IntegrabilityKillingEqn,KillingConstantsMotion, dunajski2019conformal}.
Certain conformally singular geometries
such as Poincar\'e-Einstein manifolds have proved to have a central
place in mathematical physics, geometric scattering, the AdS/CFT
correspondence of physics, as well as in conformal geometry itself~\cite{FeffermanCharles2012Tam, GuillarmouColin2009ScoP, YangP.2008Otrv,FeffermanC2002QaPm, FeffermanC.2003Amco, GrahamC.Robin2003Smic, RobinGrahamC1999Caos}. 
It
is thus important to study the distinguished curves, and their
possible first integrals, for such geometries. However classical
theory is not directly applicable, as for these structures there is
only a well-defined metric on a dense open subset of the geometry.

We recall the notion of a conformally compact manifold. 
Let $\mathring{M}$ be the interior of a $n$-manifold with boundary
$M$. So $M= \mathring{M} \cup \partial M$ and the boundary $\partial
M$ has dimension $n-1$.  The metric $\mathring{g}$ on the interior
$\mathring{M}$ of $M$ is said to be \emph{conformally compact} if
$\mathring{g} =u^{-2} g$ where $g$ is a metric on $M$ (and so is
nondegenerate up to $\partial M$) and $u$ is a defining function for
$\partial M$, i.e. the zero locus $\mathcal{Z}(u)$ of $u$ is exactly
the boundary, so $\mathcal{Z}(u) = \partial M$, and $\mathrm{d} u \neq
0$ at all points of $\partial M$.  If in addition $\mathring{g}$ is
Einstein, then $M$ is said to be \emph{Poincar\'{e}-Einstein}. In
Riemannian signature a Poincar\'e-Einstein manifold is necessarily of
negative scalar curvature, and more generally this condition on the
curvature holds asymptotically for conformally compact manifolds as
defined here. There are variations of these statements for metrics of
other signatures. There is also a suitable notion of conformal
conformally compact for the case of zero scalar curvature. These and
the related constraints, may all be understood as special cases of the
notion of an almost (pseudo-)Riemannian manifold given in
Definition~\ref{a-r}. See~\cite{Curry-G-conformal}.

Note that for conformally compact manifolds a first problem is to
describe the right classes of distinguished curves. The usual geodesic
equation is only applicable away from the singularity of the
metric. However because the metric is singular in a conformal way, the
entire manifold $M$ has a well-defined conformal structure $[g]$. This
means that conformal circles, as defined in~\cite{Yano1938,Schouten1954,FriedSchmidt,Bailey1990a, Tod} are
well-defined and can play an important role. However in some senses
conformal circles provide a class of curves that is in a sense ``too
large''. It requires a point, velocity, {\em and acceleration}, at that point, to
determine a conformal circle.

A second problem is to treat first
integrals. Suppose that  one understands that, on a conformally compact manifold,
a certain class of unparametrised geodesics meet the boundary. Then can
these sometimes be understood, or at least simplified using first
integrals? The problem now is that geodesic first integrals are
related to the existence of so called Killing tensors~\cite{KillingConstantsMotion}, but the Killing tensor equation is not conformally invariant and is  also not well defined where the metric is conformally singular. Thus the standard theory is insufficient.

The aim of this short note is to attack and, to a considerable extent
treat, these problems. We first recall some recent advances in the
treatment of conformal circles, and the construction of conserved
quantities for these. This is done in Sections~\ref{section_dcurves_in_conf} and~\ref{section_conserved_quantities} respectively, see especially
Theorem~\ref{gst_main_c}, and
follows~\cite{GST}. That source provides a new definition of conformal
circles that is especially useful for constructing the corresponding
first integrals, and also provides a rather general theory of the
latter.  That new machinery is then used here to understand the links
between conformal circles and geodesics.  This leads us  to a way to define a class
of distinguished curves for conformally compact manifolds and their
generalisations.  The result is that for a large class of conformally
singular geometries we provide a  distinguished curve equation that
is well defined at all points and specialises to exactly the usual
unparametrised geodesic equation at points where the metric is
non-singular, see Definition \ref{keydef}.  The situation is especially beautiful and simple in the
case of Einstein and Poincar\'e-Einstein manifolds, and in those
settings we explain how to use conformal geometry to construct first
integrals for these distinguished curves.

In more detail the main results are then as follows: Theorem~\ref{main_thm} gives the ``conformal characterisation'' of metric
geodesics that leads to the definition of generalised geodesics in
definition~\ref{keydef}.  Proposition~\ref{keyprop} then shows that the
generalised geodesics extend geodesics. Proposition~\ref{app1} shows
that in the case of conformally compact manifolds the generalised
geodesics with interior points and that meet the boundary must meet it
orthogonally.  Theorem~\ref{gen_geo_einstein} gives a characterisation
of (Poincar\'e-)Einstein manifolds as an agreement between
(generalised) geodesics and conformal circles. This then leads to a
main application, namely that all the machinery from~\cite{GST} for
proliferating first integrals of conformal circles may be applied (by
specialisation) to provide first integrals of the (generalised)
geodesics of (Poincar\'e-)Einstein manifolds. This is taken up briefly in Section~\ref{section_conserved_quantities}.

Some remarks on the notation and conventions used in this paper are in order. 
We work on a (pseudo-)Riemannian or conformal manifold of dimension $n \geq 3$. 
We will always assume that our manifold is connected.
The Riemannian or conformal structure is allowed to have any signature $(p,q)$, although some aspects of the paper will only be relevant when the signature is strictly pseudo-Riemannian (namely $pq \neq 0$).
We will use Penrose's abstract index notation~\cite{Penrose-Rindler-v1} for sections of vector bundles.
In this notation, lower case Latin indices are used to indicate the type of a section of a tensor bundle in the following way.
The tangent bundle is denoted by $\cE^a$ and the cotangent bundle is denoted by $\cE_b$.
Tensor products are then denoted by an appropriate combination of indices.
So for example, $v^a$ is a vector field, $\omega_{bcd}$ is a 3-form, and $T^a {}_b$ is an endomorphism of the tangent (or cotangent) bundle. 
The notation $\cE$ is used for the trivial bundle, so that $\Gamma(\cE) = C^\infty(M)$.
Square brackets around indices denotes the completely anti-symmetric part of the given tensor, and round brackets around indices denotes the completely symmetric part.
Given a vector bundle $\cV$, $\Lambda^k \cV$ will denote the subbundle of $\otimes^k \cV$ consisting of the totally anti-symmetric tensors. 
Similarly, $S^\ell \cV$ denotes the subbundle of totally symmetric tensors. 
Finally we will occasionally employ a somewhat informal ``wedge'' notation for convenience.
If $S^{a_1\cdots a_k} \in \Gamma(\Lambda^k \cV)$ and $T^{b_1\cdots b_\ell} \in \Gamma(\Lambda^\ell \cV)$, we define
\begin{equation} \label{wedge_notation}
  S \wedge T := S^{[a_1\cdots a_k} T^{b_1\cdots b_\ell]}.
\end{equation}
All objects and functions are assumed smooth ($C^\infty$).

\subsection{Acknowledgements} ARG gratefully acknowledges support from the Royal
  Society of New Zealand via Marsden Grant 16-UOA-051.

\section{Conformal geometry and conformal tractor calculus} \label{section_tractor_calc}

We review the basic theory of conformal tractor calculus that will be
required for our purposes.

Conformal density bundles are a family of line bundles which arise naturally in conformal geometry. 
Recall that any manifold $M$ is equipped with the oriented line bundle $\mathcal{K} := (\Lambda^n TM)^2$.
Conformal density bundles are then various roots of this bundle.
For $w\in\RR$, let 
\begin{equation} \label{conf_density_defn}
  \cE[w] := \mathcal{K}^{\frac{w}{2n}}.
\end{equation}
Given a vector bundle $\cV$, we define $\cV[w] := \cV \otimes \cE[w]$.

The conformal structure $(M, \bm{c})$ determines a distinguished section of $\cE_{(ab)}[2]$ which we denote $\mbf{g}_{ab}$ and call the \emph{conformal metric}.
This section is characterised by the fact that any choice of metric $g$ in the conformal class may be realised as $g = \sigma_g ^{-2} \mbf{g}$ for some $\sigma_g \in \Gamma (\cE_+ [1])$ depending on $g$.
Conversely, given a non-vanishing 1-density $\sigma \in \Gamma(\cE_+ [1])$, $g := \sigma^{-2} \mbf{g}$ is a metric in the conformal class. 
In light of this bijective correspondence between metrics in the conformal class and sections of the bundle $\cE_+ [1]$, we call sections of this bundle \emph{scales}, and will often refer to a choice of metric to facilitate computations as a \emph{choice of scale}.
We shall use $\mbf{g}$ to raise and lower indices on a conformal manifold.
This is very similar to the way that the Riemannian metric is used, since the conformal metric is preserved by \emph{any} Levi-Civita connection from the conformal class.
There is however the small caveat that raising and lowering indices via the conformal metric and its inverse does change the weight of the section on which it acts:
\begin{equation}
  \mbf{g}_{ab} : \cE^a \to \cE_b [2] \hspace{1em} \textrm{by} \hspace{1em} v^a \mapsto \mbf{g}_{ab} v^a.
\end{equation}

There are many ways to realise the standard conformal tractor bundle; here we shall present one that has the advantage of introducing an equation we will use again later.

It is a well-known fact that the following equation is conformally invariant
\begin{equation}\label{ae_eqn}
  \nabla_{(a} \nabla_{b)_{0}} \sigma + P_{(ab)_{0}} \sigma = 0,
\end{equation}
where $\sigma \in \cE[1]$ is a conformal density.  This equation will
be called the \emph{Almost-Einstein equation} (A.E.), since a solution
$\sigma$ defines an Einstein metric everywhere $\sigma \neq 0$.  The
above equation may be \emph{prolonged} to give an equivalent first
order closed system; see~\cite{Curry-G-conformal} for a more detailed
description of this process.  The prolongation moreover yields a
linear connection on a certain vector bundle.  The solutions of the
A.E. equation are in bijective correspondence with sections of this
vector bundle which are parallel for this connection.  This vector
bundle is the \emph{tractor bundle}, denoted $\cT$ and the connection
is the \emph{tractor connection}, denoted simply $\nabla$ or sometimes
$\nabla^{\cT}$ to avoid ambiguity.

A choice of metric $g\in \bm{c}$ in the conformal class induces an isomorphism 
\begin{equation} \label{cT_isom}
  \cT \overset{g}{\cong} \cE[1] \oplus \cE_a[1] \oplus \cE[-1].
\end{equation}

In terms of this, the connection acts as
\begin{equation} \label{cT_connection}
  \nabla^{\cT} _a 
  \begin{pmatrix}
    \sigma \\ \mu_b \\ \rho
  \end{pmatrix}
  \overset{g}{=}
  \begin{pmatrix}
    \nabla_a \sigma - \mu_a\\
    \nabla_a \mu_b + \mbf{g}_{ab}\rho + P_{ab}\sigma \\
    \nabla_a \rho - P_{ab} \mu^b
  \end{pmatrix},
\end{equation}
where we are again using a choice of metric to explicitly write down the formula.

The tractor bundle also comes equipped with a symmetric non-degenerate bilinear form, defined by
\begin{equation} \label{cT_metric}
  h(V,V') \overset{g}{=}
  \begin{pmatrix}
    \sigma & \mu_a & \rho
  \end{pmatrix}
  \begin{pmatrix}
    0 & 0 & 1\\
    0 & \mbf{g}^{ab} & 0\\
    1 & 0 & 1\\
  \end{pmatrix}
  \begin{pmatrix}
    \sigma' \\ \mu'_b \\ \rho'
  \end{pmatrix}.
\end{equation}

In abstract indices, tractors will be denoted with upper case Latin indices. 
Hence the tractor metric will be written $h_{AB}$ and is a section of $\cE_{(AB)}$. 
The tractor metric identifies $\cE^A$, the tractor bundle, with its dual $\cE_A$. 

There is a conformally-invariant bundle map $\cE[-1] \hookrightarrow \cT$.
We denote this map by $X$ and refer to it as the \emph{canonical tractor} or \emph{position tractor}.
It plays a central role in our characterisation of conformal circles.
The map $X$ may be thought of as a section of $\cE_A [1]$. 
Raising the tractor index via the tractor metric, we obtain a section $X^A \in \Gamma(\cE^A [1])$, which coincides with the conformally invariant projection $X^A : \cE_A \to \cE[1]$. 

This idea can be extended to give a convenient notation for working
with tractors.  We have already seen that there is a map which inserts
the subbundle $\cE[-1]$ into the tractor bundle.  Given a choice of
scale there are corresponding sections mapping the other two slots into the triple.
We emphasise these are not conformally invariant.
For a section
$V^A \in \Gamma(\cE^A)$, we write $V^A = \sigma Y^A + \mu^a Z^A _a +
\rho X^A$ following~\cite{G-PetersonCMP}.
Here, $Y^A \in \Gamma(\cE^A [-1])$ and $Z^A _a \in \Gamma(\cE^A _a [1])$.

The final element of the standard theory of tractor calculus we shall require is the second order differential operator $D : \cE[1] \to \cT$ such that $\nabla_a ^\cT D_A \sigma = 0$ if, and only if, $\sigma$ solves the A.E. equation~\eqref{ae_eqn}.
It is a way of encoding a choice of scale in a tractor. 
In a choice of scale and using the tractor projector notation of the introduced in the previous paragraph, one has  
\begin{equation} \label{thomas_D_defn}
  D_A \sigma \overset{g}{=} 
  \sigma Y_A +  \nabla_b \sigma  Z^b _A - \frac{1}{n}(\Delta + J) \sigma X_A,
\end{equation}
where $J$ is the conformal metric trace of the Schouten tensor of $g$.
We say that a tractor $I^A \in \Gamma(\cE^A)$ is a \emph{scale tractor} if $I^A = D^A \sigma$ for some $\sigma \in \Gamma(\cE_+[1])$ and $I^A$  is nowhere zero.

The canonical tractor $X^A$ mentioned previously recovers the section $\sigma$ from such a scale tractor. 
Explicitly, $X^A D_A \sigma = \sigma$.

\section{Distinguished curves in conformal geometry} \label{section_dcurves_in_conf}

For us, a curve will be a smooth, regular map $\gamma : I \to M$, where $I \subset \RR$ is an interval.
The distinguished curves of a conformal manifold are divided into two classes: the null geodesics (if the manifold is strictly pseudo-Riemannian) and the (necessarily non-null) conformal circles.

We recall here the main results that we will require from~\cite{GST}.
Rather than working directly with the velocity and acceleration of the curve, it turns out to be more natural to work with weighted versions of these. 
These are constructed as follows.
The velocity $u^a$ of the curve $\gamma$ determines a scale $\sigma_u \in \Gamma(\cE_+[1]|_{\gamma})$ along the curve via 
\begin{equation}
  \sigma_u :=
  \begin{cases}
    \sqrt{\mbf{g}_{ab} u^a u^b} & \textrm{if } u^a \textrm{ is spacelike,}\\
    \sqrt{-\mbf{g}_{ab} u^a u^b} & \textrm{if } u^a \textrm{ is timelike.}\\
  \end{cases}
\end{equation}
Using this, we define the weighted velocity, $\mbf{u}^a \in \Gamma(T\gamma[-1])$ by $\mbf{u}^a := \sigma_u ^{-1} u^a$.
This weighted velocity does not depend on the parametrisation of the curve.
Moreover, we define the weighted version of the acceleration, $\mbf{a}^b \in \Gamma(T \gamma [-2])$ by $\mbf{a}^b := \mbf{u}^a \nabla_a \mbf{u}^b $.
Note that the while weighted velocity is conformally invariant, the weighted acceleration is not.

The original conformal circle equation is a second-order ODE in the velocity of the curve; see e.g.~\cite{Bailey1990a}.
With the above definitions, we can give an equivalent presentation of the conformal circle equation, which is essentially a weighted version of equation (7) from~\cite{Bailey1990a}.

\begin{lemma}
  Let $\gamma$ be an oriented nowhere-null curve on $(M,\bc)$. Then $\gamma$ is a conformal circle  if and only if its weighted  velocity $\mbf{u}^a$   and acceleration $\mbf{a}^a$ satisfy the conformally invariant equation
  \begin{align}\label{eq-proj-inv-wt}
    \left( \mbf{u}^c \nabla_c \mbf{a}^{[a} \right) \mbf{u}^{b]} & = \pm \mbf{u}^c \Rho_c{}^{[a} \mbf{u}^{b]} \, , & \mbox{whenever $\mbf{u}^a\mbf{u}_a=\pm 1$,}
  \end{align}
  or equivalently,
  \begin{align}\label{eq-proj-inv-wt2}
    \mbf{u}^b \nabla_b \mbf{a}^a & = \pm \mbf{u}^b \Rho_b {}^a  - ( \Rho_{bc} \mbf{u}^b \mbf{u}^c \pm \mbf{a} \cdot \mbf{a} ) \mbf{u}^a \, , & \mbox{whenever $\mbf{u}^a\mbf{u}_a=\pm 1$,}
  \end{align}
  for any $g \in \bc$ with Levi-Civita connection $\nabla$.
\end{lemma}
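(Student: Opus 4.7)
The plan is to separate the lemma into two claims: a purely algebraic equivalence between the antisymmetrised form (\ref{eq-proj-inv-wt}) and the trace-corrected form (\ref{eq-proj-inv-wt2}), and then a reduction of (\ref{eq-proj-inv-wt2}) to a standard presentation of the conformal circle equation such as equation (7) of~\cite{Bailey1990a}.

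For the first claim, I would exploit the key identity
\[
  \mbf{u}^c \nabla_c \mbf{a}^a \, \mbf{u}_a = -\mbf{a}_a \mbf{a}^a ,
\]
obtained by differentiating the normalisation $\mbf{u}^a \mbf{u}_a = \pm 1$ twice and applying Leibniz, together with the consequent orthogonality $\mbf{u}_a \mbf{a}^a = 0$. To pass from (\ref{eq-proj-inv-wt}) to (\ref{eq-proj-inv-wt2}), expand the antisymmetrisation and contract with $\mbf{u}_b$; the normalisation isolates $\mbf{u}^c \nabla_c \mbf{a}^a$ on the left, while the identity above produces the $\mbf{a}\cdot\mbf{a}$ correction. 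The reverse implication follows by substituting the right-hand side of (\ref{eq-proj-inv-wt2}) into the antisymmetrisation in (\ref{eq-proj-inv-wt}); the $\mbf{u}^a$-parallel correction terms drop out because $\mbf{u}^{[a}\mbf{u}^{b]} = 0$. This part is short and formal.

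For the second claim, I would fix a metric $g \in \bc$ and a parametrisation of $\gamma$ with $\sigma_u \equiv 1$ in that scale, so that the weighted and unweighted velocities coincide and $\mbf{a}^a = \dot u^a$, $\mbf{u}^b \nabla_b \mbf{a}^a = \ddot u^a$ (with the appropriate weight identifications). Substituting into (\ref{eq-proj-inv-wt2}) yields precisely the conformal circle equation in the form recorded in~\cite{Bailey1990a}. Because both sides of (\ref{eq-proj-inv-wt2}) are homogeneous of the correct weight, they are independent of the chosen parametrisation of $\gamma$, so the equivalence on a unit-speed parametrisation implies equivalence on any parametrisation.

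The delicate point, and the one I expect to take most care, is verifying that (\ref{eq-proj-inv-wt2}) is genuinely conformally invariant so that its validity in one scale implies validity for any $g \in \bc$. Under $\hat g = e^{2\omega} g$ the weighted velocity $\mbf{u}^a$ is unchanged, but the weighted acceleration $\mbf{a}^a$ picks up a correction involving $\Upsilon_a = \nabla_a \omega$ (through the change of Levi-Civita connection), and the Schouten tensor transforms by its standard conformal formula. The hard part will be organising these transformations so that the correction to $\mbf{u}^c \nabla_c \mbf{a}^a$ cancels exactly against the transformation of $\pm \mbf{u}^b P_b{}^a - (P_{bc}\mbf{u}^b\mbf{u}^c \pm \mbf{a}\cdot\mbf{a})\mbf{u}^a$. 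This is a bookkeeping exercise using the standard formulas, but the interplay between the non-invariance of $\mbf{a}^a$ and the $\mbf{a}\cdot\mbf{a}$ term requires careful tracking of the $\mbf{u}^a$-parallel contributions.
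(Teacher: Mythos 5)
Your proposal is correct, and it follows what is essentially the intended route: the paper itself gives no in-text argument (its proof is the citation ``See~\cite{GST}'') and introduces the lemma as a weighted version of equation (7) of~\cite{Bailey1990a}, which is exactly the unit-speed equation your scale-fixing step reduces~\eqref{eq-proj-inv-wt2} to. Your algebraic equivalence between the two displays is also exactly right: differentiating $\mbf{u}^a\mbf{u}_a=\pm 1$ gives $\mbf{u}_a\mbf{a}^a=0$ and then $\mbf{u}_a\,\mbf{u}^c\nabla_c\mbf{a}^a=-\mbf{a}\cdot\mbf{a}$, and contracting $\bigl(\mbf{u}^c\nabla_c\mbf{a}^{[a}\mp\mbf{u}^c\Rho_c{}^{[a}\bigr)\mbf{u}^{b]}=0$ with $\mbf{u}_b$ recovers~\eqref{eq-proj-inv-wt2}, while the converse is immediate since the $\mbf{u}^a$-proportional terms die under antisymmetrisation. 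Two small repairs, though. First, your justification of parametrisation independence is misattributed: conformal weight governs behaviour under change of $g\in\bc$, not under reparametrisation; the correct (and simpler) reason is that under $u^a\mapsto f u^a$ one has $\sigma_u\mapsto f\sigma_u$, so $\mbf{u}^a$, hence $\mbf{a}^a$ and both equations, are literally unchanged --- a fact the paper records explicitly when defining $\mbf{u}^a$. Second, the ``delicate'' invariance bookkeeping you flag for~\eqref{eq-proj-inv-wt2} can be largely bypassed: since the equivalence of~\eqref{eq-proj-inv-wt} and~\eqref{eq-proj-inv-wt2} is scale-by-scale algebra (it uses only $\nabla\mbf{g}=0$ and the normalisation), it suffices to verify invariance of the antisymmetrised form~\eqref{eq-proj-inv-wt}; there, writing $\Upsilon_a=\nabla_a\omega$ for $\hat{g}=e^{2\omega}g$, one has $\hat{\mbf{a}}^a=\mbf{a}^a+(\Upsilon_c\mbf{u}^c)\mbf{u}^a\mp\Upsilon^a$, the $\mbf{u}$-proportional corrections drop out under the wedge with $\mbf{u}^{b]}$, and the surviving $\Upsilon$-terms are absorbed exactly by the Schouten transformation $\hat{\Rho}_{ab}=\Rho_{ab}-\nabla_a\Upsilon_b+\Upsilon_a\Upsilon_b-\tfrac{1}{2}\Upsilon_c\Upsilon^c\,\mbf{g}_{ab}$, which avoids having to track the non-invariant $\mbf{a}\cdot\mbf{a}$ term at all.
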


\begin{proof}
  See~\cite{GST}.
\end{proof}

The previous lemma may be reformulated in terms of tractors, using a moving incidence relation; this is a main result from~\cite{GST} for conformal circles. 

\begin{theorem}\label{gst_main_c}
  On a pseudo-Riemannian or conformal manifold a nowhere null curve
  $\gamma$ is an oriented conformal circle if and  only if along  $\gamma$ there is a parallel $3$-tractor $0\neq \Sigma \in \Gamma(\Lambda^3 \cT|_\gamma)$ such that
  \begin{align}\label{main_c_eqn}
    X\wedge \Sigma=0.
  \end{align}
  For a given oriented conformal circle $\gamma$ the $3$-tractor $\Sigma_\gamma$
  satisfying~\eqref{main_c_eqn} is unique up to multiplication by a
  positive constant, and unique if we specify $|\Sigma_\gamma|^2=-1$ when $\gamma$ is spacelike, or $|\Sigma_\gamma|^2=1$ when $\gamma$ is timelike.
\end{theorem}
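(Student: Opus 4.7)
My plan is to construct $\Sigma_\gamma$ explicitly from the curve data and then verify the tractor parallel transport equation by a direct Leibniz computation. Fix a scale $g \in \bc$ and work along $\gamma$. Using~\eqref{cT_connection} I would introduce a \emph{velocity tractor} $U^B := \mbf{u}^a \na_a^{\cT} X^B$ and an \emph{acceleration tractor} $A^B := \mbf{u}^a \na_a^{\cT} U^B$. Expanding in the chosen scale, $U^B$ is essentially $\mbf{u}^b Z_b{}^B$, while $A^B$ carries $\mbf{u}^a\mbf{u}_a$ in its top slot, $\mbf{a}^b$ in its middle slot, and Schouten-type corrections in its bottom slot. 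The candidate three-tractor is then
\begin{equation*}
  \Sigma^{BCD} := X^{[B} U^C A^{D]},
\end{equation*}
possibly rescaled by a positive constant.

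For the forward implication, suppose $\gamma$ is a conformal circle. Applying Leibniz,
\begin{equation*}
  \mbf{u}^a \na_a^{\cT} (X \wedge U \wedge A) = U \wedge U \wedge A + X \wedge A \wedge A + X \wedge U \wedge \bigl(\mbf{u}^a \na_a^{\cT} A\bigr).
\end{equation*}
The first two summands vanish by antisymmetry, and for the third I would recast~\eqref{eq-proj-inv-wt2} at the tractor level: the combination $\mbf{u}^a \na_a^{\cT} A^D$ lies, by the conformal circle equation, in the span of $X^D$ and $U^D$, and so dies against $X \wedge U$. Hence $\Sigma$ is parallel, and $X \wedge \Sigma = 0$ holds tautologically since $X$ is already a factor.

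For the converse, suppose $\Sigma$ is a nonzero parallel 3-tractor with $X \wedge \Sigma = 0$ along $\gamma$. I would differentiate the incidence relation along the curve to extract additional constraints: using parallelism of $\Sigma$ and $\mbf{u}^a \na_a^{\cT} X^B = U^B$ gives $U \wedge \Sigma = 0$, and differentiating once more yields $A \wedge \Sigma = 0$. Since $X$, $U$, $A$ are linearly independent along a nowhere-null oriented curve (as their slot forms make manifest), $\Sigma$ must be a scalar multiple of $X \wedge U \wedge A$, say $\Sigma = c \cdot X \wedge U \wedge A$. Reinserting into the parallelism equation and running the forward computation in reverse then forces $\dot c = 0$ and identifies the tangential part of $\mbf{u}^a \na_a^{\cT} A^D$ with the right-hand side of~\eqref{eq-proj-inv-wt2}; hence $\gamma$ is a conformal circle. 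Uniqueness up to a positive constant is then automatic, and the normalisation $|\Sigma_\gamma|^2 = \mp 1$ follows from a short evaluation of~\eqref{cT_metric} on $X \wedge U \wedge A$ using $\mbf{u}^a\mbf{u}_a = \pm 1$.

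The principal obstacle I anticipate is the tractor-level reformulation of~\eqref{eq-proj-inv-wt2}: one must carefully expand $\mbf{u}^a \na_a^{\cT} A^D$ using~\eqref{cT_connection}, track weights and Schouten contributions across all three slots, and verify that the resulting slot equations recover exactly the projective form of the conformal circle equation. Once this identification is in hand, everything else reduces to bookkeeping with wedge products and antisymmetrisation.
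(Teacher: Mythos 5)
Your proposal is correct and follows essentially the same route as the paper's own (sketched) proof: build the velocity and acceleration tractors \`{a} la Bailey--Eastwood--Gover, form $\Sigma \propto X \wedge U \wedge A$, show parallelism is equivalent to the conformal circle equation~\eqref{eq-proj-inv-wt2}, and for the converse use parallelism plus the incidence relation (differentiated to give $U \wedge \Sigma = 0$ and $A \wedge \Sigma = 0$) to force $\Sigma = c\, X \wedge U \wedge A$ with $c$ constant. Your placement of the $\sigma_u$-factors differs slightly from the paper's definitions $U^A := \sigma_u \mbf{u}^a \nabla_a(\sigma_u^{-1}X^A)$, $A^B := \sigma_u \mbf{u}^a \nabla_a U^B$ and $\Sigma^{ABC} := 6\sigma_u^{-1}X^{[A}U^B A^{C]}$, but your fully weighted variant yields the same weight-zero, parametrisation-independent $3$-tractor, so this is cosmetic.
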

\begin{proof}
We sketch here the proof, but for details see \cite{GST}.
First, following \cite{BEG}, for a non-null curve $\gamma$, we define the \emph{velocity tractor} and \emph{acceleration tractor} respectively by
\begin{equation}
  U^A := \sigma_u \mbf{u}^a \nabla_a (\sigma_u^{-1} X^A), \hspace{1em} \textrm{and} \hspace{1em} A^B := \sigma_u \mbf{u}^a \nabla_a U^B.
\end{equation}
These are conformally invariant by construction. But they are not parametrisation independent.
Then form
\begin{equation} \label{Sigma}
  \Sigma^{ABC} :=6 \sigma_u^{-1} X^{[A} U^B A^{C]} ,
  = \pm 6 \mbf{u}^c X^{[A} Y^B Z^{C]} {}_c + 6 \mbf{u}^b \mbf{a}^c X^{[A} Z^B {}_b Z^{C]} {}_c .
\end{equation}
This is parametrisation independent and conformally invariant.
Moreover it is parallel along $\gamma$ if, and only if,~\eqref{eq-proj-inv-wt2} holds.

Now if $\Sigma \in \Gamma(\Lambda^3 \cT)$ takes the form~\eqref{Sigma}, then $X \wedge \Sigma = 0$. 
On the other hand, if $\Sigma \in \Gamma(\Lambda^3 \cT)$ is parallel along the curve and satisfies $X \wedge \Sigma = 0$, then $\Sigma$ must take the form~\eqref{Sigma}. This completes the proof.
\end{proof}

We wish to remark here that the velocity and acceleration tractors, and therefore the 3-tractor $\Sigma$ as defined above, can be defined for \emph{any} non-null curve $\gamma$;
one does not require that the curve is a conformal circle. Thus $\Sigma$ is a fundamental conformal invariant of such unparametrised curves. This
observation will be relevant when we consider the distinguished curves
of Riemannian geometry in the following section.  

We shall also require the derivative of such a $\Sigma$ along the curve, and so we record that formula here for later reference:
\begin{equation}\label{deriv_Sigma}
    \mbf{u}^d \nabla_d \Sigma^{{A} {B} {C}}  =  6 \left( \mbf{u}^d \nabla_d \mbf{a}^c \mp \mbf{u}^d \Rho_d{}^c \right) \mbf{u}^b \, X^{[{A}}  Z^{{B}}{}_b Z^{{C}]} {}_c\, ,  \mbox{whenever $\mbf{u}^a \mbf{u}_a = \pm 1$,}
\end{equation}

As a consequence, we obtain a parametrisation-independent version of Proposition~3.3 from~\cite{Bailey1990a}.

\begin{proposition}\label{bailey_eastwood}
  Let $(M, \bm{c})$ be a conformal manifold and $\gamma$ a non-null, oriented curve in $M$.
Then $\gamma$ is an unparametrised conformal circle if, and only if, $\gamma$ is an unparametrised geodesic for some metric in the conformal class and $\mbf{u}^a P_a {}^b \propto \mbf{u}^b$, where $P$ is the Schouten tensor for the given metric.
\end{proposition}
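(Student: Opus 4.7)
The plan is to reformulate the statement in terms of the weighted acceleration $\mbf{a}^a$, for which the key observation is:

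\emph{Claim}. In a chosen scale $g$, the curve $\gamma$ is an unparametrised geodesic for $g$ if and only if $\mbf{a}^a = 0$ along $\gamma$.

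To see this, first note that $\mbf{u}^a \mbf{u}_a = \pm 1$ implies, upon applying $\mbf{u}^c \nabla_c$ and using that the conformal metric is Levi-Civita parallel, that $\mbf{u}_a \mbf{a}^a = 0$ always. Conversely, if $\gamma$ is an unparametrised geodesic for $g$, pick an affine parametrisation so that $u^c \nabla_c u^a = 0$. Then $g_{ab} u^a u^b$ is constant along $\gamma$, so $u^c \nabla_c \sigma_u = 0$. A direct computation of $\mbf{a}^a = \mbf{u}^c \nabla_c \mbf{u}^a$ in this parametrisation then gives $\mbf{a}^a = 0$ (and this is parametrisation-independent). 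In the other direction, $\mbf{a}^a = 0$ together with the definition $\mbf{a}^a = \mbf{u}^c \nabla_c \mbf{u}^a$ yields $u^c \nabla_c u^a \propto u^a$, i.e.\ $\gamma$ is an unparametrised geodesic.

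Given this, the reverse direction of the Proposition is immediate: if $\gamma$ is an unparametrised geodesic for $g$ and $\mbf{u}^a P_a{}^b \propto \mbf{u}^b$ in the scale $g$, then substituting $\mbf{a}^a = 0$ (hence also $\mbf{a}\cdot \mbf{a} = 0$ and $\mbf{u}^b \nabla_b \mbf{a}^a = 0$) into equation~\eqref{eq-proj-inv-wt2} reduces that equation to
\begin{equation*}
  0 = \pm \mbf{u}^b P_b{}^a - (P_{bc} \mbf{u}^b \mbf{u}^c)\, \mbf{u}^a,
\end{equation*}
which holds precisely because of the proportionality hypothesis. By the preceding Lemma, $\gamma$ is then a conformal circle.

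For the forward direction, the task is to produce a scale in which $\mbf{a}^a$ vanishes along $\gamma$. I would approach this by computing the conformal transformation law for $\mbf{a}^a$: under $\hat{g} = e^{2\omega} g$ with $\Upsilon_a := \nabla_a \omega$, one obtains (from the standard transformation of Christoffel symbols applied to the weighted velocity) a formula of the shape
\begin{equation*}
  \hat{\mbf{a}}^a = \mbf{a}^a + \Upsilon^a \mp (\Upsilon_b \mbf{u}^b)\, \mbf{u}^a.
\end{equation*}
Since $\mbf{u}_a \mbf{a}^a = \mbf{u}_a \hat{\mbf{a}}^a = 0$ in both scales, only the component of $\mbf{a}^a$ perpendicular to $\mbf{u}^a$ needs to be killed; this component transforms simply by addition of the $\mbf{u}^a$-orthogonal part of $\Upsilon^a$. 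One can therefore solve for a suitable $\Upsilon_a$ along $\gamma$ (extended arbitrarily to a neighbourhood), producing a conformal factor making $\hat{\mbf{a}}^a = 0$, i.e.\ making $\gamma$ an unparametrised geodesic for $\hat{g}$. The Proposition's condition $\hat{\mbf{u}}^a \hat{P}_a{}^b \propto \hat{\mbf{u}}^b$ then follows by running the reverse implication in the new scale. The main technical obstacle is simply verifying the transformation formula for $\mbf{a}^a$ and checking that the resulting ODE (for the component of $\Upsilon$ perpendicular to $\mbf{u}$ along $\gamma$) is unobstructed, which it is, as the equation amounts to prescribing this perpendicular component pointwise.
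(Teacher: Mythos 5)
Your reverse direction is correct and is in substance the paper's own argument, just run one level down: the paper substitutes the geodesic condition into the derivative formula~\eqref{deriv_Sigma} for the tractor $\Sigma$ and invokes Theorem~\ref{gst_main_c}, while you substitute $\mbf{a}^a=0$ directly into the weighted ODE~\eqref{eq-proj-inv-wt2}; these are the same computation. Your preliminary claim (unparametrised geodesic $\Leftrightarrow$ $\mbf{a}^a=0$ in the given scale) is right, and indeed explains why the paper's weaker-looking ``$\mbf{a}\propto\mbf{u}$'' suffices there: $\mbf{u}_a\mbf{a}^a=0$ and non-nullity force $\mbf{a}^a=0$. The algebra closes as you say, since $\mbf{u}^aP_a{}^b=\lambda\mbf{u}^b$ gives $P_{bc}\mbf{u}^b\mbf{u}^c=\pm\lambda$, so both sides of the reduced equation equal $\pm\lambda\mbf{u}^a$.

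The forward direction is where you genuinely depart from the paper. The paper invokes as a black box the fact that a conformal circle is locally an affine geodesic for some metric in the class (attributed to a private communication of Eastwood, to appear in joint work with Zalabov\'a), whereas you reprove directly the weaker statement that the curve is an unparametrised geodesic in some scale --- and the weaker statement is all the Proposition needs, so your route is more self-contained and in fact proves this for \emph{any} non-null curve, with the conformal circle hypothesis used only afterwards to extract the Schouten condition. Your transformation law is right in structure: with $\Upsilon_a=\nabla_a\omega$ one computes $\hat{\mbf{a}}^a=\mbf{a}^a\mp\Upsilon^a+(\Upsilon_b\mbf{u}^b)\mbf{u}^a=\mbf{a}^a\mp\Upsilon_\perp^a$ (your $\mp$ sits on the wrong term, but this is immaterial: only surjectivity onto the $\mbf{u}$-orthogonal complement matters, and $\mbf{a}^a$ is automatically orthogonal to $\mbf{u}^a$). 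The one step that needs repair is ``solve for a suitable $\Upsilon_a$ along $\gamma$ (extended arbitrarily to a neighbourhood)'': $\Upsilon_a$ must be \emph{exact}, and an arbitrary extension of a covector field prescribed along $\gamma$ is not a gradient in general. The standard fix should be stated: on an embedded sub-arc take Fermi/tubular coordinates $(x,y^i)$ adapted to $\gamma$ and set $\omega(x,y):=A_i(x)\,y^i$, where $A_i$ are the prescribed normal components; then $\nabla\omega$ restricted to $\gamma$ has exactly the required orthogonal part (the unconstrained tangential part comes for free). Note this makes the conclusion inherently local: at a self-intersection the single covector $d\omega|_p$ would have to satisfy two generally incompatible conditions, one per branch. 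This is no worse than the paper, whose own proof also only produces the metric locally, but ``for some metric in the conformal class'' should be read in that local sense. With that patch your proof is correct.
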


\begin{proof}
  First, suppose that $\gamma$ is a conformal circle. 
  Locally, there exists a metric in the conformal class for which $\gamma$ is an affine geodesic~\cite{Bailey1990a}. 
  (A proof of this fact was sent to us in a private communication from Michael Eastwood. 
  It will appear a forthcoming article of his with Lenka Zalabov\'{a} \cite{MikeLenka}.)
  Let $\sigma$ be the scale corresponding to this metric.
  Now, working in the scale $\sigma$,~\eqref{deriv_Sigma} becomes
  \begin{equation}\label{reduced_deriv_Sigma}
    \mbf{u}^d \nabla_d \Sigma^{{A} {B} {C}}  =   \mp 6 \mbf{u}^d \Rho_d{}^c \mbf{u}^b \, X^{[{A}}  Z^{{B}}{}_b Z^{{C}]} {}_c\, ,
  \end{equation}
where $\Rho_d{}^c$ is the Schouten for this metric.
  But since $\gamma$ is a conformal circle, the left side of the previous display must be zero by Theorem~\ref{gst_main_c}.
  Hence $\mbf{u}^d P_d {}^c \propto \mbf{u}^c$.
  Conversely, if there is a scale in the conformal class for which $\gamma$ is a geodesic, then working in the assumed scale, $\mbf{a} \propto \mbf{u}$ and hence~\eqref{deriv_Sigma} will again take the form~\eqref{reduced_deriv_Sigma}.
  Hence if $\mbf{u}^d P_d {}^c \propto \mbf{u}^c$ then $\mbf{u}^d \nabla_d \Sigma^{ABC} = 0$, and so $\gamma$ is a conformal circle by Theorem~\ref{gst_main_c}.
\end{proof}

\section{Metric geodesics via conformal tractor machinery} \label{section_riem_geos}

Here we give first a characterisation of geodesics that, in its nature,
is ``as conformal as possible'' and which also clarifies the relation between geodesics and conformal circles.
We have already noted that the section $\Sigma \in \Gamma(\Lambda^3 \cT|_\gamma)$ from display~\eqref{Sigma}, and defined by $\Sigma^{ABC} :=6 \sigma_u ^{-1} X^{[A} U^B A^{C]}$, is determined by  \emph{any} non-null unparametrised curve $\gamma$ in a conformal manifold.
While initially defined and used in our work characterising conformal circles, the following theorem shows that this 3-tractor also turns out to provide a way to characterise Riemannian distinguished curves.

\begin{theorem}\label{main_thm}
  Let $(M,g)$ be a pseudo-Riemannian manifold. 
  A non-null curve $\gamma$ is an unparametrised geodesic of $g$ if, and only if, the scale tractor $I=D\si$ (where $g=\si^{-2}\mbf{g}$) satisfies  $I \wedge \Sigma = 0$.
\end{theorem}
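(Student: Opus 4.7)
The plan is to work in the scale $g$ corresponding to $\sigma$ itself, where both $I$ and $\Sigma$ take a simple projector form, and then compute $I \wedge \Sigma$ directly. Since the equation $I \wedge \Sigma = 0$ is a tractor-level identity, verifying it in any one scale establishes it in every scale.

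First I would evaluate the scale tractor $I = D\sigma$ in the scale $g$. Because $g = \sigma^{-2}\mathbf{g}$ trivialises $\sigma$ to the constant $1$, we have $\nabla_a \sigma = 0$ and $\Delta \sigma = 0$ in this scale. Substituting into \eqref{thomas_D_defn} gives
\begin{equation}
I^A \overset{g}{=} Y^A - \frac{J}{n} X^A,
\end{equation}
i.e.\ the middle ($Z$) slot of $I$ vanishes identically in the scale $g$. In the same scale, $\Sigma$ has the projector expansion recorded in \eqref{Sigma}:
\begin{equation}
\Sigma^{ABC} = \pm 6\, \mbf{u}^c\, X^{[A} Y^B Z^{C]}{}_c + 6\, \mbf{u}^b \mbf{a}^c\, X^{[A} Z^B{}_b Z^{C]}{}_c.
\end{equation}

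Next I would compute $I \wedge \Sigma = I^{[A}\Sigma^{BCD]}$ term by term. Every term of $\Sigma$ already contains a factor of $X$, so $X \wedge \Sigma = 0$ automatically, and the $X$-slot of $I$ contributes nothing. Only the $Y$-slot of $I$ remains, and the $\mbf{u}^c X \wedge Y \wedge Z_c$-piece of $\Sigma$ is killed by $Y \wedge (\,\cdot\,)$ since it already contains $Y$. Thus the only surviving contribution is
\begin{equation}
I \wedge \Sigma \;\propto\; \mbf{u}^{[b} \mbf{a}^{c]}\, Y \wedge X \wedge Z_b \wedge Z_c,
\end{equation}
and the tractors $Y \wedge X \wedge Z_b \wedge Z_c$ are linearly independent over distinct index pairs $(b,c)$. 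Hence $I \wedge \Sigma = 0$ if and only if $\mbf{u}^{[b}\mbf{a}^{c]} = 0$, i.e.\ $\mbf{a}$ is proportional to $\mbf{u}$ in the scale $g$.

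Finally, I would close the argument by observing that in the scale $g$, $\mbf{u}$ is normalised so that $\mbf{g}_{ab}\mbf{u}^a\mbf{u}^b = \pm 1$ along $\gamma$; differentiating along the curve gives $\mbf{g}_{ab}\mbf{u}^a \mbf{a}^b = 0$, so $\mbf{a}$ is $\mbf{g}$-orthogonal to $\mbf{u}$. Combined with $\mbf{a} \propto \mbf{u}$, this forces $\mbf{a} = 0$, which is precisely the geodesic equation for $g$ in the (affine) unit-speed parametrisation. Hence $I \wedge \Sigma = 0$ iff $\gamma$ is an unparametrised geodesic of $g$. The only step requiring a little care is the last one: one must resist reading $\mbf{a} \propto \mbf{u}$ as an \emph{unparametrised} geodesic condition in a conformally rescaled metric, and instead note that the scale-dependent normalisation of $\mbf{u}$ upgrades this proportionality to outright vanishing of $\mbf{a}$ in the scale $g$.
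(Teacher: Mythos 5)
Your proof is correct and follows essentially the same route as the paper's: both work in the scale determined by $g$, expand $I$ and $\Sigma$ in the tractor projectors, and reduce $I \wedge \Sigma = 0$ to $\mbf{u}^{[b}\mbf{a}^{c]} = 0$ by pointwise linear independence of the projectors. Your closing step is a slight (and in fact slightly more careful) variant: where the paper unwinds $\mbf{a} \propto \mbf{u}$ into $a \propto u$ for the unweighted velocity and acceleration, you use the normalisation $\mbf{u}^a\mbf{u}_a = \pm 1$ to get $\mbf{u}\cdot\mbf{a} = 0$ and hence $\mbf{a} = 0$ outright, which is the unit-speed geodesic equation.
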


\begin{proof}
  Suppose that $\gamma$ is an unparametrised geodesic for $g$.
  Then we may form $\Sigma^{ABC}$.
  Let $\sigma$ be the scale determined by $g$, i.e. $g = \sigma^{-2} \mbf{g}$, and let $I := \frac{1}{n} D \sigma$.
  Then, working in the scale $\sigma$, we have that $\nabla \sigma = 0$ (see e.g.~\cite{Curry-G-conformal}) and so $I^A = \sigma Y^A + \rho X^A$, for some $\rho \in \cE[-1]$
    whose explicit form will not be needed.
  Then 
  \begin{align*}
    I \wedge \Sigma &= \left(\sigma Y^A + \rho X^A \right ) \wedge \left(6 \mbf{u}^a X^{[B} Y^{C} Z^{D]} _a \mp 6 \mbf{u}^a \mbf{a}^b X^{[B} Z^C _a Z^{D]} _b \right)\\
                    &= \mp 6 \sigma \mbf{u}^a \mbf{a}^b Y^{[A} X^B Z^C _a Z^{D]} _b.
  \end{align*}
  Note that in the scale $\sigma$, $\mbf{a}^c = \sigma^{-2} a^c$ and $\mbf{u}^c = \sigma^{-1} u^b$, and hence $\mbf{u} \propto \mbf{a}$ if, and only if $a \propto u$.
  But the latter is the case, since $\gamma$ is an unparametrised geodesic, so $I \wedge \Sigma = 0$.

Conversely, suppose that the  scale tractor $I$ satisfies  $I \wedge \Sigma = 0$.
     Then in the scale $\sigma$, $I \wedge \Sigma$ again takes the form of the above display.
  Since the tractor projectors are linearly independent pointwise, $I \wedge \Sigma = 0$ implies that $\mbf{a} \propto \mbf{u}$, which in turn implies that $a^b \propto u^b$ in the scale $\sigma$.
  Hence $\gamma$ is an unparametrised geodesic for the metric $g$.
\end{proof}

Theorem~\ref{main_thm} suggests a way to generalise the geodesic
equation to a larger class of structures, which includes conformally
compact manifolds.

Recall that on an Einstein manifold with Einstein scale $\sigma \in
\Gamma(\cE_+[1])$, the corresponding scale tractor $I_A := \frac{1}{n}
D_A \sigma$ is parallel.  More generally, such a scale tractor may be
used to define a slight weakening of the notion of a pseudo-Riemannian
manifold.  Note that if a scale tractor $I_A$ is nowhere zero, then
$\sigma=X^AI_A$ is non-zero on an open dense set. Therefore that same
open dense set possesses a pseudo-Riemannian metric defined by
$\mathring{g} := \sigma^{-2} \mbf{g}$.

\begin{definition} \label{a-r}
  We shall call $(M ,\bm{c}, \sigma)$ an \emph{almost pseudo-Riemannian manifold} if $I := \frac{1}{n} D\sigma$ is nowhere-zero.
\end{definition}

This definition encompasses conformally compact structures, the open
dense set being the interior, uncompactified manifold, and
$\sigma^{-2} \mbf{g}$ recovers the original metric. But similar
comments apply to suitable conformal type compactifications of metrics
in any signature and also the case where the scalar curvature is zero~\cite{Curry-G-conformal}. 
And it is also interesting to consider closed manifolds with such a almost pseudo-Riemannian structure~\cite{GRiemSig, Curry-G-conformal}.
For any such geometry there is a natural extension of the (unparametrised) geodesic equation to any points where the metric is singular, as follows.

\begin{definition} \label{keydef}
  Let $(M,\bm{c}, \sigma)$ be an almost pseudo-Riemannian manifold. 
  We will say that an unparametrised curve $\gamma$ in $M$ is a \emph{generalised geodesic} if
  \begin{equation} \label{gen_geodesic_equation}
    I \wedge \Sigma = 0,
  \end{equation}
  where $I := \frac{1}{n} D \sigma$ and $\Sigma$ is the 3-tractor defined in Section \ref{section_riem_geos}.
\end{definition}

Observe that the equation  (\ref{gen_geodesic_equation}) is
well defined globally on an almost pseudo-Riemannian manifold, and in
particular through the zero locus $\cZ(\sigma)$ (of the generalised scale $\si$)
where the metric is singular. However in any neighbourhood where $\si$ is non-vanishing
the generalised geodesic equation
recovers the usual geodesic equation. Thus generalised  geodesics extend geodesics to(/through) the metric singularity set.

\begin{proposition}\label{keyprop}
  Let $(M, \bm{c}, \sigma)$ be an almost Riemannian manifold. 
  Suppose that $\gamma$ is a smooth unparametrised curve  on $M$
  that is a geodesic for $g=\si^{-2}\bm{g}$ on
  $M\setminus \mathcal{Z}(\si)$.
  Then $\gamma$ is a generalised geodesic of $(M,\bm{c},\si)$.
\end{proposition}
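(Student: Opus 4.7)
The plan is to invoke Theorem~\ref{main_thm} on the open set $M\setminus\cZ(\sigma)$, where $g=\sigma^{-2}\mbf{g}$ is a bona fide metric, and then extend from that dense subset of $\gamma$ to the whole curve by continuity. The substantive content is to check that both $I$ and $\Sigma$ are globally defined and smooth along $\gamma$ (including at the points of $\cZ(\sigma)$), after which the vanishing locus of a smooth section propagates automatically.

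First I would observe that the generalised geodesic equation is a condition on a smooth section of $\Lambda^4\cT|_\gamma$. The scale tractor $I=\frac{1}{n}D\sigma$ is smooth on all of $M$, since $\sigma\in\Gamma(\cE[1])$ is smooth and $D$ is a differential operator. The curve tractor $\Sigma^{ABC}=6\sigma_u^{-1}X^{[A}U^BA^{C]}$ is, by its construction, assembled from the canonical tractor $X^A$, the conformal metric $\mbf{g}$ (used to build $\sigma_u$ and the weighted velocity and acceleration $\mbf{u}^a$, $\mbf{a}^a$), and smooth derivatives of $\gamma$; crucially it makes no reference to the singular metric $g$. In almost Riemannian signature every nonzero tangent vector is spacelike with respect to $\mbf{g}$, so $\sigma_u$ is smooth and nowhere zero along the regular curve $\gamma$. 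Hence $\Sigma$, and therefore $I\wedge\Sigma$, is smooth along all of $\gamma$, including through $\cZ(\sigma)$.

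Next, on $U:=M\setminus\cZ(\sigma)$ the density $\sigma$ is a nowhere-vanishing scale and $g=\sigma^{-2}\mbf{g}$ is a genuine pseudo-Riemannian metric. By hypothesis the restriction of $\gamma$ to $U$ is an unparametrised $g$-geodesic, so Theorem~\ref{main_thm} applies verbatim and gives $I\wedge\Sigma=0$ at every point of $\gamma\cap U$. Because $I$ is nowhere zero on $M$, the set $\cZ(\sigma)$ is a proper, nowhere-dense closed subset (in the motivating examples it is a smooth hypersurface, since then $\nabla\sigma\ne 0$ along $\cZ(\sigma)$), and so $\gamma\cap U$ is open and dense in $\gamma$. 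Smoothness of $I\wedge\Sigma$ then forces its zero set to be closed, and density promotes the vanishing to all of $\gamma$.

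I anticipate no genuine obstacle here — the argument is essentially bookkeeping plus a density/continuity step. The only delicate corner is the degenerate possibility that $\gamma$ lies entirely inside $\cZ(\sigma)$, in which case the hypothesis is vacuous; this is ruled out in the settings of interest (for instance, for a conformally compact manifold $\cZ(\sigma)$ is a boundary hypersurface, so a curve is not generically contained in it), but in full generality it would need a separate, formal remark.
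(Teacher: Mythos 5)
Your proof is correct and follows essentially the same route as the paper's: apply Theorem~\ref{main_thm} on the dense open set $M\setminus\cZ(\sigma)$ where $g$ is a genuine metric, then use smoothness of $I\wedge\Sigma$ along $\gamma$ to propagate the vanishing through $\cZ(\sigma)$. Your added care is actually a point in your favour: the paper's one-line continuity step tacitly assumes $\gamma\cap\bigl(M\setminus\cZ(\sigma)\bigr)$ is dense in $\gamma$, and you correctly flag that a curve segment lying entirely inside $\cZ(\sigma)$ would escape the argument and needs to be excluded (or remarked upon) separately.
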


\begin{proof}
  Since   $\gamma$ satisfies the
  geodesic equation on $M\setminus \mathcal{Z}(\si)$.  This means that $I \wedge
  \Sigma = 0$ on that open and dense set, as seen in
  Theorem~\ref{main_thm}.  But by smoothness, this is then also true
  on the closure $M$, and hence $I \wedge \Sigma = 0$ everywhere, and
  $\gamma$ is a generalised geodesic.
\end{proof}

Specialising to conformally compact manifolds we have the following
result.  It turns out that the generalised geodesics which extend
smoothly to the boundary cannot do so arbitrarily.  In fact, they may
only meet the boundary in a very controlled way (that generalises the
well known special case of compactified hyperbolic space).

\begin{proposition}\label{app1}
  On a conformally compact manifold, any generalised geodesic which
  extends to the boundary meets the boundary orthogonally.
\end{proposition}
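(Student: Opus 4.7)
The plan is to localise the equation $I \wedge \Sigma = 0$ at a boundary point and read off orthogonality directly from the reduced form of the scale tractor there. Fix a point $p \in \partial M$ where the generalised geodesic $\gamma$ meets the boundary, and choose any metric $g$ in the conformal class, so that the splitting~\eqref{cT_isom} applies. Because $\sigma$ is a defining function for $\partial M$, we have $\sigma(p) = 0$ while $(\nabla_a \sigma)(p) \neq 0$. Substituting into~\eqref{thomas_D_defn}, the scale tractor at $p$ reduces to
\[
I_A\big|_p \;=\; \tfrac{1}{n}\bigl[(\nabla_b\sigma)\, Z^b_A + \rho\, X_A\bigr],
\]
with $\rho := -\tfrac{1}{n}(\Delta + J)\sigma|_p$; note that the $Y$-slot has been annihilated by the vanishing of $\sigma$ at $p$.

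Next I would wedge with $\Sigma$. The $\rho\, X \wedge \Sigma$ contribution vanishes identically because $\Sigma^{ABC}$, by its construction in~\eqref{Sigma}, already carries an antisymmetrised factor of $X^{[A}$, so that $X \wedge \Sigma = 0$. Hence at $p$,
\[
I \wedge \Sigma \;=\; \tfrac{1}{n}\,(\nabla_d\sigma)\, Z^d \wedge \Sigma.
\]
Substituting the explicit expansion $\Sigma = \pm 6\,\mbf{u}^c X^{[A} Y^B Z^{C]}_c + 6\,\mbf{u}^b \mbf{a}^c X^{[A} Z^B_b Z^{C]}_c$ from~\eqref{Sigma} produces a 4-tractor with two contributions taking values in linearly independent subbundles of $\Lambda^4 \cT$, spanned respectively by wedges of the form $X \wedge Y \wedge Z \wedge Z$ and $X \wedge Z \wedge Z \wedge Z$. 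The vanishing of the first forces
\[
\nabla^{[d}\sigma\, \mbf{u}^{c]} \;=\; 0,
\]
i.e.\ $\mbf{u}^a(p)$ is pointwise proportional to $(\nabla^a\sigma)(p)$; the vanishing of the second piece is then automatic.

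Finally I would translate this into the geometric statement. Since $\nabla\sigma$ is nowhere zero on $\partial M$ and $d\sigma$ annihilates $T_p\partial M$ (as $\sigma$ is identically zero on the boundary), raising the index with any representative $g$ shows that $\nabla^a\sigma$ spans the $g$-normal line to $\partial M$ at $p$. The proportionality $\mbf{u}^a \propto \nabla^a\sigma$ is therefore exactly the assertion that $\gamma$ meets $\partial M$ orthogonally. The only delicate point is verifying the linear independence of the two tractor sectors above so that each may be set to zero separately; this relies only on the pointwise splitting of $\cT$ determined by $g$, and is routine once the calculation has been set up.
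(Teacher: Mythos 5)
Your proposal is correct and takes essentially the same route as the paper's own proof: evaluate the scale tractor at the boundary using $\sigma|_{\partial M}=0$, note that the $X$-term of $I$ wedges to zero against $\Sigma$, and read off $\mbf{u}^a \propto \nabla^a \sigma$ from the $X\wedge Y\wedge Z\wedge Z$ sector, which is exactly orthogonality since $\nabla_a\sigma$ is a conormal. The only (welcome) difference is that you explicitly verify the $X\wedge Z\wedge Z\wedge Z$ sector vanishes automatically once $\mbf{u}\propto\nabla\sigma$, a point the paper's displayed computation passes over without comment.
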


\begin{proof}
  Since $\partial M = \cZ(\sigma)$, at the boundary, 
  \begin{equation}\label{I_along_bdry}
    I^A |_{\partial M} = \nabla^a \sigma Z^A {}_a -\frac{1}{n} \Delta \sigma X^A,
  \end{equation}
and $\nabla \si$ is non-vanishing at all boundary points.
  Thus
  \begin{align*}
    (I^A \wedge \Sigma^{BCD})|_{\partial M} &= \left(\nabla^a \sigma Z_a {}^A -\frac{1}{n} \Delta \sigma X^A\right ) \wedge \left(6 \mbf{u}^d X^{[B} Y^{C} Z^{D]} _d \mp 6 \mbf{u}^c \mbf{a}^d X^{[B} Z^C _c Z^{D]} _d \right)\\
                                    &= 6 \mbf{u}^d \nabla^a \sigma Z^{[A} {}_a X^B Y^C Z^{D]} {}_d .
  \end{align*}
  
  Thus $I\wedge \Sigma = 0$ implies that $\mbf{u} \propto \nabla \sigma$.
  But $\nabla \sigma$ is a conormal to the boundary, and therefore $\mbf{u}$ is orthogonal to the boundary as well.
\end{proof}
\section{(Poincar\'{e}-)Einstein structures}\label{section_p_e_structures}

We have already noted that a solution to~\eqref{ae_eqn} determines an Einstein metric
almost-everywhere.  Hence following e.g.~\cite{gover2004almost}, we
make the following definition.

\begin{definition}\label{ae-structure}
  We say that $(M,\bm{c},\sigma)$ is an \emph{Almost-Einstein structure} if $\sigma$ is a nontrivial solution to~\eqref{ae_eqn}.
\end{definition}

In this case the scale tractor $I$ is parallel, and thus nowhere zero. Thus the structure is almost pseudo-Riemannian and $\si$
is non-vanishing on an open, dense set. On the set where $\sigma$ is non-zero the metric defined by $ g:= \sigma^{-2} \mbf{g}$ is Einstein.
Proposition~\ref{bailey_eastwood} now gives the following result.

\begin{corollary}\label{geo_is_conf_circ}
  Suppose $(M,\bm{c},\sigma)$ is an almost-Einstein manifold.  If
  $\gamma$ is an generalised geodesic then $\gamma$ is an
  unparametrised conformal circle.
\end{corollary}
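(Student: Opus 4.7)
The plan is to apply Proposition~\ref{bailey_eastwood} on the open dense set $U := M \setminus \cZ(\sigma)$ where the Einstein metric $g = \sigma^{-2}\mbf{g}$ is actually defined, and then propagate the conclusion to all of $\gamma$ by continuity. The almost-Einstein hypothesis does two jobs here: it ensures the scale tractor $I = \frac{1}{n} D\sigma$ is parallel and nowhere zero (so $(M,\bm{c},\sigma)$ is in particular almost pseudo-Riemannian and the generalised geodesic equation is meaningful), and it forces the Schouten tensor of $g$ on $U$ to be pure trace, $P_{ab} = \frac{J}{n} g_{ab}$, making the proportionality hypothesis of Proposition~\ref{bailey_eastwood} automatic.

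First I would restrict attention to $\gamma \cap U$. There the generalised geodesic equation $I \wedge \Sigma = 0$ coincides, by Theorem~\ref{main_thm}, with $\gamma$ being an unparametrised geodesic of $g$. Since $P_a{}^b$ is a scalar multiple of $\delta_a{}^b$ on $U$, we get $\mbf{u}^a P_a{}^b \propto \mbf{u}^b$ along $\gamma \cap U$, and so Proposition~\ref{bailey_eastwood} identifies $\gamma \cap U$ as an unparametrised conformal circle. Equivalently, by Theorem~\ref{gst_main_c}, the 3-tractor $\Sigma^{ABC}$ attached to $\gamma$ satisfies $\mbf{u}^d \nabla_d \Sigma^{ABC} = 0$ at every point of $\gamma \cap U$.

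The remaining task is to promote parallelism of $\Sigma$ along $\gamma$ from the open dense subset $\gamma \cap U$ to all of $\gamma$. Here I would invoke the observation made immediately after the proof of Theorem~\ref{gst_main_c}: the 3-tractor $\Sigma^{ABC} = 6\sigma_u^{-1} X^{[A} U^B A^{C]}$ is manufactured from the weighted velocity and acceleration alone, hence is a smooth tractor field along \emph{any} nowhere-null smooth curve. In particular $\mbf{u}^d \nabla_d \Sigma^{ABC}$ is continuous along $\gamma$, and a continuous section that vanishes on a dense subset vanishes identically. Theorem~\ref{gst_main_c} then identifies $\gamma$ globally as an unparametrised conformal circle in $(M,\bm{c})$.

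The only genuine point of care, and the step I expect to be the main obstacle to tighten rigorously, is ensuring that $\gamma \cap U$ is actually dense in $\gamma$, i.e.\ that the generalised geodesic is not entirely swallowed by $\cZ(\sigma)$. For almost-Einstein structures $\cZ(\sigma)$ is generically a smooth hypersurface, and in the conformally compact setting Proposition~\ref{app1} guarantees that generalised geodesics reaching the boundary meet it transversally; so in all cases of interest $\gamma \cap U$ is open and dense in $\gamma$, and the density argument goes through.
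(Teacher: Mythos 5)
Your proof is correct and follows essentially the same route as the paper's: the paper likewise applies Proposition~\ref{bailey_eastwood} on $M\setminus\cZ(\sigma)$, where the Einstein condition forces $P_{ab}\propto g_{ab}$, and then extends the conclusion to all of $\gamma$ ``by continuity,'' which your argument via parallelism of $\Sigma$ and Theorem~\ref{gst_main_c} simply makes more explicit. The one worry you flag---density of $\gamma\cap U$ in $\gamma$---does resolve as you suspect: at any point of $\gamma$ lying on $\cZ(\sigma)$, the computation behind Proposition~\ref{app1} together with $I\wedge\Sigma=0$ forces $\mbf{u}\propto\nabla\sigma\neq 0$, so the (nowhere-null) curve crosses $\cZ(\sigma)$ transversally and such points are isolated, making $\gamma\cap U$ dense in $\gamma$.
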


\begin{proof}
  Note that if $\gamma$ is a geodesic and in addition $\mbf{u}^b P_b
  {}^c \propto \mbf{u}^c$, then $\gamma$ is a conformal circle by
  Proposition~\ref{bailey_eastwood}.  But this immediately implies the result on
  $M\setminus \mathcal{Z}(\si)$, since if $M$ is Einstein, $P_{bc}
  \propto g_{bc}$. Then the full result follows by continuity.
\end{proof}

The above also admits a converse, yielding the following
characterisation of Einstein manifolds and Poincar\'e-Einstein
manifolds.

\begin{theorem}\label{gen_geo_einstein}
  Let $(M, \bm{c}, \sigma)$ be a (conformally compact or)
  pseudo-Riemannian manifold.  Then every
  (generalised) geodesic of $M$ is a conformal circle if, and only if,
  $(M, \bm{c}, \sigma)$ is (Poincar\'{e}-)Einstein.
\end{theorem}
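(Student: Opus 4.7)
One direction—that every generalised geodesic is a conformal circle when $(M,\bm{c},\sigma)$ is (Poincar\'e-)Einstein—is exactly Corollary~\ref{geo_is_conf_circ}, so the content lies in the converse. The plan is to assume that every generalised geodesic is a conformal circle and deduce that $\sigma$ satisfies the almost-Einstein equation~\eqref{ae_eqn} on all of $M$; in the conformally compact setting, this delivers Poincar\'e-Einstein.

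I would first work on the open dense set $\mathring{M} := M \setminus \cZ(\sigma)$, on which $g := \sigma^{-2} \mbf{g}$ is a genuine pseudo-Riemannian metric. Fix $p \in \mathring{M}$ and a non-null direction $v \in T_pM$. By Proposition~\ref{keyprop} the $g$-geodesic through $p$ with initial velocity $v$ is a generalised geodesic, hence by hypothesis also a conformal circle. Following the computation in the proof of Proposition~\ref{bailey_eastwood}, but now applied in the scale $\sigma$, the geodesic condition $\mbf{a} \propto \mbf{u}$ collapses~\eqref{deriv_Sigma} to
\begin{equation*}
  \mbf{u}^d \nabla_d \Sigma^{ABC} = \mp 6 \, \mbf{u}^d P_d{}^c \, \mbf{u}^b \, X^{[A} Z^{B}{}_b Z^{C]}{}_c,
\end{equation*}
with $P$ the Schouten tensor of $g$. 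Theorem~\ref{gst_main_c} forces the left-hand side to vanish; a short antisymmetrisation on the tractor indices (noting that any $\mbf{u}^{b} \mbf{u}^{c}$ contribution is already symmetric in $b,c$ and dies against $X^{[A} Z^{B}{}_{b} Z^{C]}{}_{c}$) then yields $\mbf{u}^d P_d{}^c \propto \mbf{u}^c$ at $p$; equivalently, $v$ is an eigenvector of the endomorphism $P^{c}{}_{b}|_p$.

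Because this holds for every non-null $v \in T_p M$, and the set of non-null directions is open and spans $T_p M$ at each point in every signature, the endomorphism $P^c{}_b|_p$ must be a scalar multiple of the identity. Hence $P_{ab} \propto g_{ab}$ at $p$, which is equivalent to $g$ being Einstein at $p$. Since $p \in \mathring{M}$ was arbitrary, $g$ is Einstein on $\mathring{M}$, so $\sigma$ satisfies~\eqref{ae_eqn} on this open dense set. Smoothness of $\sigma$ and continuity of the second-order operator in~\eqref{ae_eqn} propagate the identity across $\cZ(\sigma)$ to all of $M$, establishing the almost-Einstein structure; in the conformally compact case this is precisely Poincar\'e-Einstein.

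The main obstacle I anticipate is the tractor-antisymmetrisation step that converts the vanishing of $\mbf{u}^d \nabla_d \Sigma$ into the Schouten eigenvalue condition for the \emph{given} metric $g$, rather than for some unspecified metric in the conformal class as in Proposition~\ref{bailey_eastwood}. A secondary, signature-related check is that even in indefinite signature the non-null complement of the null cone is open and spanning in each tangent space, so enough eigenvectors are available to force $P^c{}_b$ to act as a scalar.
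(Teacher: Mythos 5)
Your proposal is correct and follows essentially the same route as the paper: the forward direction is exactly Corollary~\ref{geo_is_conf_circ}, and for the converse you reduce, via the computation behind Proposition~\ref{bailey_eastwood} applied in the scale $\sigma$, to the pointwise eigenvector condition $\mbf{u}^d P_d{}^c \propto \mbf{u}^c$ for all non-null $\mbf{u}$, forcing $P \propto \mbf{g}$ and hence Einstein off $\cZ(\sigma)$. Your two refinements---rederiving the Schouten condition directly for the given metric $g$ (where the paper cites Proposition~\ref{bailey_eastwood} more tersely) and replacing the paper's explicit orthonormal-basis computation with an openness/density argument on non-null directions---are minor polish on the same argument, not a different method.
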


\begin{proof}
  One direction is exactly Corollary~\ref{geo_is_conf_circ}.
  For the converse, we must show that if every geodesic of the manifold is a conformal circle, then the metric $g=\si^{-2}\bm{g}$ on $M\setminus \mathcal{Z}(\si)$ is Einstein.
  Note that for any $p \in M\setminus \mathcal{Z}(\si)$ and any $\mbf{u}^a \in \cE^a[-1]$, there is an unparametrised geodesic passing through $p$ with velocity $\mbf{u}^a$. 
  The statement we wish to prove now essentially reduces to a question of linear algebra. 
  Fix an orthonormal basis $\{e_1, \ldots e_n\}$ for $T_p M[-1]$, by which we mean $g(e_i, e_i) = \pm 1$ for all $i$ according to the signature of the metric.
  Recalling corollary~\ref{bailey_eastwood}, each geodesic  is an  unparametrised
  conformal circle only if  $\mbf{u}^a P_a {}^b
  \propto \mbf{u}^b$. Where $P_a {}^b$ is the Schouten tensor of the metric $g$.
  Working in the above basis, we see immediately
  that, $P(e_i, e_j) = 0$ when $i\neq j$, and hence when written as a
  matrix, $P = \operatorname{diag} \{ \lambda_1, \lambda_2, \ldots,
  \lambda_n \}$.
  But $\mbf{u}^a P_a
         {}^b \propto \mbf{u}^b$ for \emph{all} $\mbf{u} \in T_p
         M[-1]$.  By taking $\mbf{u} = e_i + \alpha e_j$, for $\alpha
         \in \RR$ as $i,j$ range over $1,2,\ldots, n$, we conclude
         that $\lambda_i = \lambda_j$ for all $i,j$.  Hence $P =
         \lambda \operatorname{Id}$ (where $P$ is the matrix of $P_i{}^j$ in this basis) and the proposition follows.
\end{proof}

This theorem suggests one way to approach the study and construction
of conserved quantities on Einstein manifolds.  Namely, one may view
the (generalised-)geodesics of an (Poincar\'e-)Einstein manifold as conformal circles that in addition satisfy $I\wedge \Sigma=0$. Then, 
using the theory from~\cite{GST}, conformal BGG equations become
available as a source of ``symmetries'' to construct first integrals.
We will touch on this idea in the following section.

\section{Conserved quantities} \label{section_conserved_quantities}

One of the main applications of a tractor characterisation of distinguished curves is the development of a general theory of first integrals for such curves.
We review a general method for producing conserved quantities for conformal circles, and refer the reader to~\cite{GST} for a more detailed treatment. 
Suppose $\gamma$ is a distinguished curve.
Let $S \in \Gamma(\cV)$ where $\cV$ is some (suitably) irreducible part of $S^k \Sigma$ or for $k\in \NN$, where by $S^k \Sigma$ we mean $\Sigma \odot \Sigma \odot \cdots \odot \Sigma$, with $\Sigma$ repeated $k$ times.
Then since $S$ will only involve $\Sigma$ and possibly the tractor metric, one has 
\[
  \mbf{u}^a \nabla_a S = 0.
\]
Suppose moreover that $T$ is a section of the dual bundle $\cV^*$.
Then, if $\mbf{u}^a \nabla_a T = 0$, we will have
\[
  \mbf{u}^a \nabla_a \langle S, T \rangle = 0,
\]
where $\langle \cdot, \cdot \rangle$ denotes the dual pairing between
sections of $\cV$ and $\cV^*$.  Asking for such sections $T$ that are
parallel is restrictive, but far from fatally.  Solutions to so-called
BGG equations are in bijective correspondence with sections of tractor
bundles which are parallel for a modified tractor connection - the
\emph{prolongation connection} of~\cite{HSSS}.  In the case that a
solution is a \emph{normal} solution~\cite{CapA2012NBSA,CGH-Duke}, this
modified connection agrees with the standard tractor connection.  Thus
if $T$ arises as the section of some tractor bundle corresponding to a
normal solution to a BGG equation, we will have $\mbf{u}^a \nabla_a T
= 0$ as desired. For some BGG equations all solutions are normal.  On
conformally flat manifolds all solutions are normal. Most known
superintegrable geometries are conformally flat. Then in fact the
examples in \cite{GST} show that surprisingly the first integrals
found do not actually require normality. We treat such an example
here.

\subsection{An example in the Poincar\'{e}-Einstein case}

We have already seen that on a (Poincar\'{e}-)Einstein manifold any generalised geodesic is a conformal circle, that satisfies in addition $I\wedge \Sigma=0$.

As a consequence of Theorem~\ref{gen_geo_einstein}, we see that
conformal geometry machinery may be used to proliferate first
integrals of the generalised geodesics on (Poincar\'{e}-)Einstein
manifolds. (Or more more generally almost-Einstein manifolds. For
simplicity here we simply discuss the Poincar\'e-Einstein case.) 

\begin{proposition}
  Let $(M, \bm{c}, \sigma)$ be a (Poincar\'{e}-)Einstein manifold.
  The first integrals of generalised geodesics of $M$ are also first integrals of conformal geodesics, and may therefore be produced using solutions to BGG equations, c.f. Theorem 6.3 of~\cite{GST}.
\end{proposition}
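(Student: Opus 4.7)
The plan is to reduce the statement to the first integral theory for conformal circles developed in~\cite{GST}, using the identification of generalised geodesics as a subclass of conformal circles supplied by Corollary~\ref{geo_is_conf_circ}. That corollary (together with Theorem~\ref{gen_geo_einstein}) tells us that on a (Poincar\'e-)Einstein manifold every generalised geodesic is an unparametrised conformal circle, so any quantity which is conserved along all conformal circles is automatically conserved along every generalised geodesic. This dualises the containment of curve classes to a containment of first integrals and reduces the problem to invoking the known conformal circle construction.

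Next I would recall the tractor recipe from the opening of Section~\ref{section_conserved_quantities}. Along any conformal circle $\gamma$, the 3-tractor $\Sigma\in\Gamma(\Lambda^3\cT|_\gamma)$ of Theorem~\ref{gst_main_c} is parallel, so any section $S\in\Gamma(\cV|_\gamma)$ obtained from an irreducible component of some $S^k\Sigma$ (using only $\Sigma$ and the tractor metric $h_{AB}$) also satisfies $\mbf{u}^a\nabla_a S = 0$. Pairing $S$ with a section $T\in\Gamma(\cV^*)$ that is $\nabla^{\cT}$-parallel along $\gamma$ then yields the first integral $\langle S,T\rangle$. The supply of such $T$ comes from normal solutions of first BGG operators: the prolongation theory identifies BGG solutions with sections of tractor bundles parallel for a prolongation connection, and when the solution is normal that prolongation connection agrees with $\nabla^{\cT}$. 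This is exactly the content of Theorem 6.3 of~\cite{GST}, and applying it to the generalised geodesics — which are conformal circles by Corollary~\ref{geo_is_conf_circ} — furnishes the desired first integrals.

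The subtle point I expect to be the main obstacle is that in the Poincar\'e-Einstein setting the manifold carries a conformal singularity locus $\cZ(\sigma)=\partial M$ at which the metric $g=\sigma^{-2}\mbf{g}$ degenerates, so one should check that the conservation law extends across the boundary. However, the tractor bundles and the tractor connection $\nabla^{\cT}$ are smooth objects attached to the ambient conformal structure $\bm{c}$, so both $S$ and $T$, and hence $\langle S,T\rangle$, extend smoothly across $\cZ(\sigma)$. Since a generalised geodesic is by definition a smooth curve in $M$, the identity $\mbf{u}^a\nabla_a\langle S,T\rangle = 0$, which is manifest on the open dense set $M\setminus \cZ(\sigma)$, propagates to all of $M$ by continuity, exactly as in the proof of Proposition~\ref{keyprop}. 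This boundary bookkeeping is routine once the tractor viewpoint has been adopted, so the argument is essentially complete at that stage.
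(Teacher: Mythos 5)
Your proposal is correct and takes essentially the same route as the paper, which treats the proposition as an immediate consequence of Corollary~\ref{geo_is_conf_circ} (every generalised geodesic is a conformal circle) combined with the first-integral machinery of~\cite{GST} recalled at the start of Section~\ref{section_conserved_quantities} --- precisely the reduction you make, and the same pattern used in the paper's proof of Theorem~\ref{p_e_example}. Your closing paragraph on propagating the conservation law across $\cZ(\sigma)$ is harmless but redundant, since Corollary~\ref{geo_is_conf_circ} already establishes by continuity that the curve is a conformal circle on all of $M$, so the parallelism of $\Sigma$ and hence the conservation of $\langle S,T\rangle$ holds everywhere with no separate boundary argument needed.
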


Recall the conformal Killing form equation:
\begin{equation}
  \operatorname{tf}(\nabla F) \in \Gamma(\Lambda^{k+1} T^*M),
\end{equation}
where $\operatorname{tf}$ means the metric trace-free part of the
given tensor.  Normal solutions to this equation are in bijective
correspondence with sections of $\Lambda^3
\cT^*$ that are parallel for the usual tractor connection \cite{G-Sil-ckforms}.  In this particular case, it turns out
that the normality assumption is not even required.

\begin{theorem}\label{p_e_example}
  Let $(M, \bm{c}, \sigma)$ be a Poincar\'{e}-Einstein manifold.
  Suppose that $\gamma$ is a generalised geodesic, and form $\Sigma^{ABC}$ according to~\eqref{Sigma}.
  Finally, suppose that \( k_{ab}\in \Gamma(\ce_{[bc]}[3]) \) is a conformal Killing-Yano
  2-form of the conformal manifold $(M,\bm{c})$, i.e.\ $k_{ab}$ satisfies
  \begin{align}\label{eq-CKY2}
    \nabla_{a}k_{bc} & =\nabla_{[a}k_{bc]} -\frac{2}{n-1}\mbf{g}_{a[b}\nabla^pk_{c]p} \, .
  \end{align}
  Write \(\Gamma( \Lambda^3\cT^* )\ni \mathbb{K}_{ABC}
  :=L(k) \) where the BGG splitting operator  $L:
    \ce_{[bc]}[3]\to \mathcal{E}_{[ABC]}$ maps a conformal Killing 2-form invariantly to its corresponding 3-tractor \cite{CGH-Duke,HSSS}.
  Then expression
  \[
    \Sigma^{ABC} \mathbb{K}_{ABC}
  \]
  equivalently,
  \[
    \mathbf{u}^a \mathbf{a}^b k_{ab} \mp \frac{1}{n-1} \mathbf{u}^a \nabla^p k_{pa},
  \]
  is a first integral of the generalised geodesic $\gamma$, where   $\mbf{u}^a$ denotes its  weighted velocity, with $\mbf{u}^a \mbf{u}_a = \pm 1$, depending on whether the curve $\gamma$ is spacelike or timelike respectively.
\end{theorem}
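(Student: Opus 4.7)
The strategy is to apply the general conserved-quantity mechanism of Section~\ref{section_conserved_quantities} to the pairing $\Sigma^{ABC} \mathbb{K}_{ABC}$. Since $(M,\bm{c},\sigma)$ is Poincar\'e-Einstein and $\gamma$ is a generalised geodesic, Corollary~\ref{geo_is_conf_circ} tells us $\gamma$ is an unparametrised conformal circle, so Theorem~\ref{gst_main_c} guarantees $\mathbf{u}^d \nabla_d \Sigma^{ABC}=0$ along $\gamma$. Consequently, first-integral status of the scalar $\Sigma^{ABC}\mathbb{K}_{ABC}$ reduces to verifying
\begin{equation*}
  \Sigma^{ABC}\,\mathbf{u}^d \nabla_d \mathbb{K}_{ABC} = 0
\end{equation*}
along $\gamma$.

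If $k_{ab}$ is a \emph{normal} BGG solution of~\eqref{eq-CKY2} then $\mathbb{K}_{ABC} = L(k)_{ABC}$ is parallel for the standard tractor connection and the identity is immediate. The content of the theorem is that normality is not needed. For a general CKY 2-form the deviation $\nabla_d \mathbb{K}_{ABC}$ is a curvature-correction term that, by the standard BGG prolongation theory, lives in the bottom slot of the composition series of $\Lambda^3 \cT^*$, which in projector notation takes the form $X_{[A} X_B (\,\cdots)_{C]d}$. Because $\Sigma^{ABC}$ carries an overall $X^{[A}$ factor (from its construction via the position tractor in~\eqref{Sigma}), the tractor-metric contraction with such a correction produces the factor $X^A X_A = 0$. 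The Poincar\'e-Einstein hypothesis enters here by reducing the curvature contractions driving this correction to Weyl-only pieces, ensuring the correction indeed sits in the claimed slot.

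To produce the explicit formula I would expand $\Sigma^{ABC}$ as in~\eqref{Sigma} in an arbitrary scale and expand $\mathbb{K}_{ABC}$ using the splitting operator $L$ for the CKY equation: the top $X \wedge Z \wedge Z$ slot of $\mathbb{K}$ is $k_{bc}$ and the adjacent $X \wedge Y \wedge Z$ slot is proportional to $\nabla^p k_{pb}$, with remaining terms in strictly lower slots. Using the pairings $X^A Y_A = 1$, $Z^{Aa} Z_A^b = \mathbf{g}^{ab}$ and the vanishing cross-pairings, exactly two contractions survive: the $XZZ$ piece of $\Sigma$ pairs with the $XZZ$ piece of $\mathbb{K}$ to give $\mathbf{u}^a \mathbf{a}^b k_{ab}$, and the $\pm XYZ$ piece pairs with the $XYZ$ piece to give $\mp \tfrac{1}{n-1} \mathbf{u}^a \nabla^p k_{pa}$, recovering the stated second form.

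The main obstacle is the slot analysis in the second paragraph: one must make precise the form of the BGG prolongation connection for conformal Killing 2-forms, identify the exact shape of its difference with the standard tractor connection, and confirm that every resulting correction term lands in a slot where the $X$-factor of $\Sigma$ forces vanishing. The Einstein condition is essential here, since it controls which Schouten-driven terms in the prolongation correction collapse to multiples of the conformal metric (and hence into slots killed by $\Sigma$), leaving only pieces that are likewise annihilated. Once this slot analysis is settled, the contraction identity is immediate and the closed-form expression follows by the direct computation outlined above.
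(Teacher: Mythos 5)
Your overall skeleton matches the paper's proof: invoke Corollary~\ref{geo_is_conf_circ} to conclude that the generalised geodesic $\gamma$ is a conformal circle, so that $\Sigma$ is parallel along $\gamma$ by Theorem~\ref{gst_main_c}, and reduce the claim to $\Sigma^{ABC}\,\mbf{u}^d\nabla_d \mathbb{K}_{ABC}=0$. At this point the paper simply cites Theorem~6.8 of~\cite{GST}, which asserts precisely this first integral for conformal circles on an \emph{arbitrary} conformal manifold, with no normality assumption and no Einstein condition; you instead attempt to reprove that result, and your argument for it has a genuine gap. The claimed shape $X_{[A}X_B(\cdots)_{C]d}$ for the deviation $\nabla_d\mathbb{K}_{ABC}$ is identically zero: $X_AX_B$ is symmetric in $A,B$, so its skew part over $[ABC]$ vanishes, and if the deviation really had that form then every conformal Killing 2-form would be normal, which is false. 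The composition series of $\Lambda^3\cT^*$ has slots of type $Y\wedge Z\wedge Z$ (the top, carrying $k_{bc}$), $Z\wedge Z\wedge Z$ and $X\wedge Y\wedge Z$ (middle), and $X\wedge Z\wedge Z$ (bottom); there is no $X\wedge X$ slot. A correct slot analysis shows that the $Z\wedge Z\wedge Z$ and bottom $X\wedge Z\wedge Z$ components of $\nabla\mathbb{K}$ are indeed annihilated on contraction with $\Sigma$, but the $X\wedge Y\wedge Z$ component of $\nabla\mathbb{K}$ pairs \emph{nontrivially} with the $X\wedge Y\wedge Z$ part of $\Sigma$, so the required vanishing cannot be forced by slot position alone: one must write out the explicit curvature correction to parallelism for conformal Killing 2-forms and verify that the surviving Weyl/Cotton-type contractions against $\mbf{u}$ and $\mbf{a}$ cancel along a conformal circle by curvature symmetries together with the conformal circle equation. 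That computation is exactly the content of the cited theorem in~\cite{GST}, and it is the step you flag as an ``obstacle'' but do not supply. Relatedly, you misplace the Poincar\'e--Einstein hypothesis: it is used once, in Corollary~\ref{geo_is_conf_circ}, and plays no role whatsoever in the conserved-quantity step, which holds for conformal circles on any conformal manifold.

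A secondary, fixable error is in the pairing bookkeeping: the ``$XZZ$ piece of $\Sigma$ pairs with the $XZZ$ piece of $\mathbb{K}$'' contraction is zero, since $X^AX_A=0$ and $X^AZ_A{}^a=0$. The term $\mbf{u}^a\mbf{a}^bk_{ab}$ in fact arises from pairing the $X\wedge Z\wedge Z$ term of $\Sigma$ in~\eqref{Sigma} with the \emph{top} slot $Y_{[A}Z_B{}^bZ_{C]}{}^c\,k_{bc}$ of $\mathbb{K}$, using $X^AY_A=1$; the divergence term arises, as you say, from the two $X\wedge Y\wedge Z$ pieces. Your final formula is the right one, but the contractions as quoted would all vanish.
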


\begin{proof}
  According to Corollary~\ref{geo_is_conf_circ}, any generalised
  geodesic of the (Poincar\'e-)Einstein manifold is in fact a
  conformal circle for the underlying conformal manifold.  Thus the
  curve $\gamma$ is also a conformal circle.
   Having established this, the result follows at once from
  Theorem 6.8 of~\cite{GST}.
\end{proof}
\noindent Note that on the interior of the manifold we may work in the
scale $\mathring{g}=\si^{-2}\bm{g}$ whence the first integral
simplifies to $ \mathbf{u}^a \nabla^p k_{pa}$.

\bibliographystyle{spmpsci}
\bibliography{GST}
\end{document}